\renewcommand{\thefigure}{\arabic{figure}}
\newenvironment{mfpfig}[1]
{\figure[htb] \centering \refstepcounter{figure} \label{#1} \par\medskip \endfigure}
\newtheorem{thm}{Theorem}[section] \newtheorem{prop}[thm]{Proposition}
\newtheorem{lm}[thm]{Lemma}
\newtheorem{hyp}[thm]{Hypothesis}
 \newtheorem{rem}[thm]{Remark}
\newtheorem{ex}[thm]{Example} 
 \newcounter{i} 
\newcommand{\rr}{\mathbb R}
\newcommand{\rn}{{\mathbb R}^n}
 \newcommand{\xx}{\dot{x}}
\newcommand{\xxx}{\ddot{x}} \newcommand{\eps}{\varepsilon}
\newcommand{\grad}{\mbox{\rm{grad}}}
\newcommand{\V}{V_T}
\newcommand{\dg}{\mbox{\rm{deg}}}
\newcommand{\dist}{\mbox{\rm{dist}}}
\title{On the Dirichlet problem in billiard spaces}
\author{Grzegorz Gabor\\ Faculty of Mathematics and Computer Science,\\ Nicolaus
Copernicus University,\\ Chopina 12/18, 87-100 Toru\'{n}, Poland\\ ggabor@mat.umk.pl}
\date{}
\begin{document}

\maketitle

\begin{abstract}
The constrained Dirichlet boundary value problem $\xxx=f(t,x)$, $x(0)=x(T)$, is studied in billiard spaces, where impacts occur in boundary points. Therefore we develop the research on impulsive Dirichlet problems with state-dependent impulses. Inspiring simple examples lead to an approach enabling to obtain both the existence and multiplicity results in one dimensional billiards. Several observations concerning the multidimensional case are also given.

\end{abstract}

\noindent {\em Keywords:} Dirichlet problem; state-dependent impulses; BVP; billiard, \\
{\em MSC[2010]:} 34B37,
34B15

\renewcommand{\thefootnote}{\arabic{footnote}}



\section{Introduction}\label{intro}

For three hundred and fifty years several important problems of mechanics and physics have been investigated and modeled as mathematical problems with impacts. Starting with the works of G. D. Birkhoff, dynamical systems in spaces of the billiard type have been intensively studied. The simplest impact law, for absolutely elastic impacts, can be described geometrically as the equality of angles before and after a collision with a boundary of the billiard space. This law, for simplicity, will be assumed in the present paper.

An elementary observation is that the dynamical system of a billiard type with a uniform motion can be modeled by the simple impulsive second-order system
\begin{equation}\label{birk}
 \left\{ \begin{array}{ll}
\xxx(t)=0, & \mbox{for a.e. } t\geq 0,\\
\xx(s+)=\xx(s)+I(x(s),\xx(s)), & \mbox{if } x(s)\in \partial K,
\end{array}\right.
\end{equation}
where $K=\overline{int K}\subset\rn$ is a compact subset, and $I$ is an impulse function describing the impact law. It is easy to check that for a unit ball $B(0,1)\subset\rn$ and for the equality of the angle of incidence and angle of reflection, one has $I(x(s),\xx(s))=-2\langle x(s),\xx(s)\rangle x(s)$.

Let us imagine a one-dimensional billiard which is not a straight line but a graph of some differential function $\gamma:[a,b]\to\mathbb R$. We can think about some hills and valleys on our simple one-dimensional table. Assume the gravity directed downstairs. Then, the horizontal component of the acceleration is nonzero. In fact, the motion can be described by a more general equation $\xxx(t)=-k\,\grad \gamma(x(t))$, where $k$ is some constant depending on the gravity.

If we allow an external force depending on time (e.g., a wind), we get even more general equation $\xxx(t)=f(t,x(t))$. One can also easily guess that for tables generated by a nondifferentiable but Lipschitz function $\gamma$ we obtain a second-order differential inclusion $\xxx(t)\in F(t,x(t))$. This all above motivates to investigate the system
\begin{equation}\label{problem}
 \left\{ \begin{array}{ll}
\xxx(t)=f(t,x(t)), & \mbox{for a.e. } t\geq 0, x(t)\in int K\\
\xx(s+)=\xx(s)+I(x(s),\xx(s)), & \mbox{if } x(s)\in \partial K.
\end{array}\right.
\end{equation}

Different kinds of problems in billiards are interesting, e.g., existence of  periodic motions and their stability, number of distinct periodic trajectories, etc (see \cite{kozlov} and references therein). We are interested in the boundary value problem of the Dirichlet type: [\eqref{problem} and $x(0)=x(T)=0$] motivated by the research explained below.

In last decades a lot of papers on impulsive boundary value problems have been published. Most of them concerns impulses at fixed moments. In this case one can see direct and clear analogies with the approach and results for problems without impulses. Several difficulties appear when impulses depend on the state variable or both the time and state. One meets the case in e.g. differential population models or mechanics where impulses occurs if some quantities attain a suitable barrier. The papers dealing with state-dependent impulsive problems focus attention mainly on initial or periodic problems, and results on the existence, asymptotic behavior or a stability of solutions. We refer e.g. to \cite{basi} (and references therein), where the impulsive periodic problem, also for some kinds of second-order differential equations, is studied.

Unfortunately, the Dirichlet impulsive boundary value problem cannot be brought to the first-order one, and different techniques are needed (comp. some recent papers using a variational approach for problems with fixed impulse times, \cite{nieto, nie-ore, zhang}.  Quite recently in \cite{rato} the authors examined the problem

\begin{equation}\label{problem-rato}
 \left\{ \begin{array}{ll}
\xxx(t)=f(t,x(t)), & \mbox{for a.e. } t\in [0,T],\\
\xx(s+)=\xx(s)+I(x(s)), & \mbox{if } s=g(x(s)),
\end{array}\right.
\end{equation}
where $g$ is a $C^1$ function satisfying some additional conditions. They provided a new method to solve the problem. Namely, they successfully transformed the problem to the fixed point problem in an appropriate function space. Note that both the impulse function $I$ and the barrier $Graph(g)=\{(x,s); s=g(x)\}$ are not adequate for billiard problems. Indeed, in billiards the impulse depends also on the velocity before the impact. Moreover, the barrier is not a graph of a function with arguments in a phase space. Note also that in \cite{rato} the assumptions insist all trajectories go through the barrier without coming back while in billiards trajectories stay on the same side of the barrier after the impact. The technique presented in \cite{rato} does not work for the Dirichlet problem in a billiard space. This problem, as far as the author knows, is still unexplored.

Therefore the aim of the paper is to study the existence and multiplicity of solutions to the Dirichlet impulsive boundary value problem
\begin{equation}\label{problem1}
 \left\{ \begin{array}{ll}
\xxx(t)=f(t,x(t)), & \mbox{for a.e. } t\in [0,T], x(t)\in int\, K,\\
\xx(s+)=\xx(s)+I(x(s),\xx(s)), & \mbox{if } x(s)\in \partial K,\\
x(0)=x(T)=0, &
\end{array}\right.
\end{equation}
where $I$ describes the impact law of
\renewcommand{\labelenumi}{(H0)}
\begin{enumerate}
\item the equality of the angle of incidence and angle of reflection and the equality of a length of the velocity vector before and after the impact.\label{I}
\end{enumerate}
 By a solution of \eqref{problem1} we mean a continuous function $x:[0,T]\to\rn$, which has an absolutely continuous derivative in intervals where $x(t)\in int\, K$, satisfies $\xxx(t)=f(t,x(t))$ for a.e. $t\in [0,T]$, the impulse condition $\xx(s+)=\xx(s)+I(x(s),\xx(s))$ in boundary points, and the boundary value condition $x(0)=x(T)=0$. Note that different solutions of problem \eqref{problem} can have different numbers of impact points. In general they also can slide along the barrier $\partial K$, and their integral representation is difficult. So, the problem attracts by its nontriviality.

We start in Section \ref{2ex} with two examples inspiring the further research. Then the results about a one-dimensional case are presented in Section \ref{1dim}. The final section concerns problems in $\rn$ so in multidimensional billiard spaces. The author hopes the paper is a valuable development of the area of second-order impulsive boundary value problems. Simultaneously, the theory of billiards is developed in the direction of crooked tables.


\section{Two inspiring examples} \label{2ex}

Let $K=[-a,a]$, where $a>0$, and let $G$ be the Green function for the autonomous problem $\xxx=0, x(0)=x(T)=0$, see e.g. \cite{stak}. We assume in the whole paper that
\renewcommand{\labelenumi}{(H1)}
\begin{enumerate}
\item the right-hand side $f$ (comp. \eqref{problem1}) is a Carath\'eodory function, shortly $f\in Car([0,T]\times\mathbb R)$, which is, for simplicity, integrably bounded, i.e., $f(\cdot,x):[0,T]\to \mathbb R$ is measurable for every $x\in \mathbb R$, $f(t,\cdot):\mathbb R\to \mathbb R$ is continuous for a.e. $t\in [0,T]$, and $|f(t,x)|\leq m(t)$ for some integrable function $m\in L^1([0,T])$ and each $(t,x)\in [0,T]\times \mathbb R$.\label{F}
\end{enumerate}
 Then solutions of the nonimpulsive Dirichlet problem can be obtained as fixed points of the operator $\mathcal F$ in $C^1([0,T])$,
$$\mathcal F(x)(t):=\int_0^T G(t,s)f(s,x(s))ds.$$
Assume that $x\in Fix \mathcal F$. Then $|x(t)|\leq M$ for some $M\geq 0$ and every $t\in [0,T]$. If $M\leq a$, then the solution $x$ remains in $int K$ and the problem \eqref{problem1} is trivially solved. If $M>a$, then the trajectory bounces off the barrier, and the situation becomes nontrivial.

In the first example below we consider simple autonomous equation.

\begin{ex} \label{ex1}
\begin{em}
Consider the problem
\begin{equation}\label{problem-ex1}
 \left\{ \begin{array}{ll}
\xxx(t)=2, & \mbox{for a.e. } t\in [0,1], |x(t)|<\frac{1}{8},\\
\xx(s+)=-\xx(s), & \mbox{if } |x(s)|=\frac{1}{8},\\
x(0)=x(1)=0. &
\end{array}\right.
\end{equation}
It means that $I(x(s),\xx(s))=-2\xx(s)$.

Notice that the unique solution of the corresponding nonimpulsive problem takes the form $x(t)=t(t-1)$ with $||x||=1/4$. Therefore, it does not fit in $K=[-1/8,1/8]$. The first time when the trajectory meets the barrier can be easily computed as $t_0=\frac{2-\sqrt{2}}{4}$. After the impact, we have a trajectory $x(t)=t^2+(\sqrt{2}-1)t+\frac{1-\sqrt{2}}{2}$. One can check that, after some impacts, we obtain $x(1)\neq 0$. To find a solution of \eqref{problem-ex1} we notice, at first, that the unique solution of the autonomous nonimpulsive problem
\begin{equation}\label{problem-ex1-1}
 \left\{ \begin{array}{ll}
\xxx(t)=f(x(t)), & \mbox{for } t\in [0,1],\\
x(0)=x(1)=0 &
\end{array}\right.
\end{equation}
satisfies the equality $x(1/2-t)=x(1/2+t)$ for every $t\in [0,1/2]$. Indeed, one can check that $y(t):=x(1-t)$ is a solution, and, by the uniqueness, $x(t)=x(1-t)$.

Now, we would like to find an initial velocity $v=\xx(0)$ such that the function starting as $x(t):= t(t+v)$ has the following moments of impacts: $\{t_1,1/2,1-t_1\}$ and $x(1)=0$, so we look for a solution of \eqref{problem-ex1} symmetric with respect to $t=1/2$. It is sufficient to find $v$ such that $4t_1+2t_0=1$, where $t_0$ is a negative solution of the equation $t(t+v)=1/8$ (see Figure \ref{fig1}). After some computation we obtain $v\approx -0.8568$, $t_0\approx -0.127$ and $t_1\approx 0.186475$.

\begin{center}
\setlength{\mfpicunit}{5cm}
\begin{mfpfig}{fig1}
\begin{mfpic}{-.2}{1.2}{-.24}{.24}

\axes \xmarks{-.127,.186,.5,.814,1} \ymarks{-.125,.125} \tlpointsep{3pt} \axislabels
x{{$0$}-.04,{$1$}1.02,{$t_1$}.17,{$1-t_1$}.8} \axislabels
y{{$-\frac{1}{8}$}-.18,{$\frac{1}{8}$}.18}
 \tlabel[bl](.02,.17){$x$}
 \tlabel[bl](1.1,-.1){$t$}
\tlabel[bl](-.18,-.1){$t_0$}
\dashed\polyline{(-.13,.125),(1,.125)}
\dashed\polyline{(-.13,-.125),(1,-.125)}
\dashed\polyline{(1,.125),(1,-.125)}
\penwd{3pt} \draw[black]\function{0,.186475,.1}{x**2-.8568*x}
\draw[black]\function{.186475,.5,.1}{(x+.48385)**2-.8568*(x+.48385)}
\draw[black]\function{.5,.813525,.1}{(x-.62705)**2-.8568*(x-.62705)}
\draw[black]\function{.813525,1,.1}{(x-.1432)**2-.8568*(x-.1432)}
 \penwd{1pt}
\dotted\draw[black]\function{-.127,0,.05}{x**2-.8568*x}
 \tcaption{Figure \thefigure. Solution of \eqref{problem-ex1} for $\xx(0)\approx -0.8568$.}
\end{mfpic}
\end{mfpfig}

\end{center}

The solution of problem \eqref{problem-ex1} is not unique. To find the second one it is sufficient to solve the equation $8t_1-6t_0=1$ (we obtain the initial velocity $v_1\approx -1.76579$). It has $7$ impact points. In fact, there are infinitely many solutions obtained when the absolute value of the initial velocity tends to infinity.
\end{em}
\end{ex}

The second example concerns a nonautonomous case, so we loose a symmetry with respect to $t=T/2$.

\begin{ex} \label{ex2}
\begin{em}
Consider the problem
\begin{equation}\label{problem-ex2}
 \left\{ \begin{array}{ll}
\xxx(t)=6t, & \mbox{for a.e. } t\in [0,1], |x(t)|<\frac{3}{8},\\
\xx(s+)=-\xx(s), & \mbox{if } |x(s)|=\frac{3}{8},\\
x(0)=x(1)=0. &
\end{array}\right.
\end{equation}

Now, the function $x(t)=t^3-t$ is the unique solution of the corresponding nonimpulsive problem. But the barrier is such that $x(1/2)=-3/8$ and $\xx(1/2)<0$. Hence, we have an impact. In fact, it is the only impact before $t=1$. Denote by $x_1$ the function which equals $x$ up to the impact, $\xxx_1(t)=6t$ for $t\geq t_1=1/2$, and $\lim_{s\downarrow 1/2}\xx_1(s)=-\xx_1(1/2)$. One can check that $x_1(t)=t^3-\frac{1}{2}t-\frac{1}{4}$ for $t>1/2$. Therefore, $x_1(1)=1/4$ (see Figure \ref{fig2}).

\begin{center}
\setlength{\mfpicunit}{5cm}
\begin{mfpfig}{fig2}
\begin{mfpic}{-.2}{1.2}{-.44}{.44}

\axes \xmarks{.5} \ymarks{-.375,.375} \tlpointsep{3pt} \axislabels
x{{$0$}-.04,{$1$}1.02,{$\frac{1}{2}$}.5} \axislabels
y{{$-\frac{3}{8}$}-.4,{$\frac{3}{8}$}.37}
 \tlabel[bl](.02,.42){$x$}
 \tlabel[bl](1.1,-.1){$t$}
\dashed\polyline{(0,.375),(1,.375)}
\dashed\polyline{(0,-.375),(1,-.375)}
\dashed\polyline{(1,.375),(1,-.375)}
\draw[black]\function{0,1,.1}{x**3-x}
\penwd{3pt} \draw[black]\function{0,.5,.1}{x**3-x}
\draw[black]\function{.5,1,.1}{x**3-.5*x-.25}
 \tcaption[3,2]{Figure \thefigure. Solution of $\xxx(t)=6t$, $x(0)=x(1)=0$, and the trajectory with the impulse effect.}
\end{mfpic}
\end{mfpfig}

\end{center}
The initial velocity of $x_1$ is equal to $\xx_1(0)=-1$. We know that each solution $x_v$, depending on the initial velocity $\xx_v(0)=v$, of the impulsive Cauchy problem
\begin{equation}\label{problem-ex2-1}
 \left\{ \begin{array}{ll}
\xxx(t)=6t, & \mbox{for a.e. } t\in [0,1], |x(t)|<\frac{3}{8},\\
\xx(s+)=-\xx(s), & \mbox{if } |x(s)|=\frac{3}{8},\\
x(0)=0 &
\end{array}\right.
\end{equation}
up to the first impact has the form $x_v(t)=t^3+vt$. It is not hard to check that for every $v\in [-1,1]$ we get $x_v(1)>0$, and for $-\sqrt[3]{243/256}<v\leq 1$ the trajectory meets only the upper part of the barrier ($x=3/8$) (see Figure \ref{fig3}).

\begin{center}
\setlength{\mfpicunit}{5cm}
\begin{mfpfig}{fig3}
\begin{mfpic}{-.2}{1.2}{-.44}{.44}

\axes \xmarks{.5,.7211} \ymarks{-.375,.375} \tlpointsep{3pt} \axislabels
x{{$0$}-.04,{$1$}1.02,{$\frac{1}{2}$}.5} \axislabels
y{{$-\frac{3}{8}$}-.4,{$\frac{3}{8}$}.37}
 \tlabel[bl](.02,.42){$x$}
 \tlabel[bl](1.1,-.1){$t$}
\dashed\polyline{(0,.375),(1,.375)}
\dashed\polyline{(0,-.375),(1,-.375)}
\dashed\polyline{(1,.375),(1,-.375)}
\penwd{3pt}
\draw[black]\function{0,.7211248,.1}{x**3}
\draw[black]\function{.7211248,1,.1}{x**3-6*.52*x+2.25}
\penwd{1pt}
\draw[black]\function{0,.3367965291,.1}{x**3+x}
\draw[black]\function{.3367965291,1,.1}{x**3-1.680591412*x+0.9028138835}
 \tcaption{Figure \thefigure. Trajectory for $v=0$ (bold) and for $v=1$.}
\end{mfpic}
\end{mfpfig}

\end{center}

Taking into account the above trajectories, we try to solve problem \eqref{problem-ex2} by increasing the absolute value of $v$.

Assume that $v<-1$. The first impact time $t_1$ is a solution of $t_1^3=-vt_1-3/8$. After this impact the trajectory is given by $x(t)=t^3-(6t_1^2+v)t-4vt_1-9/4$ (we use an impact law in $t_1$). Analogously, after the second impact time $t_2$ we obtain $x(t)=t^3-(6t_2^2-6t_1^2-v)t+3/8+5t_2^3-6t_1^2t_2-vt_2$. Now we evaluate in $t=1$ and ask for $v$ such that $x(1)<0$. Since the last point $x(1)$ depends continuously on $v$ (see details in Section \ref{1dim}), it follows that there exists a trajectory with $x(1)=0$, so a solution of \eqref{problem-ex2}. One can check that for $v=-1.218$ we have $x(1)\approx -0.0006379$.

\end{em}
\end{ex}

The above two examples show that we can expect solutions of \eqref{problem1} for big initial velocities, and we can also expect a multiplicity of solutions.


\section{Existence and multiplicity of solutions} \label{1dim}

We start with a simple consequence of assumption (H1).

\begin{prop}\label{prop1}
Assume that $x:[0,T]\to K=[-a,a]$ is a solution of the Cauchy problem
\begin{equation}\label{problem-cauchy}
 \left\{ \begin{array}{ll}
\xxx(t)=f(t,x(t)), & \mbox{for a.e. } t\geq 0,\\
\xx(s+)=\xx(s)+I(x(s),\xx(s)), & \mbox{if } x(s)\in \partial K,\\
x(0)=0,
\end{array}\right.
\end{equation}
with an initial velocity $\xx(0)=v$, and $t_1,t_2,\ldots$ the moments of impacts. Then, for each sufficiently big $|v|$, one has $x(t_k)x(t_{k+1})<0$ for every $k\geq 1$, that is, each two consecutive impact points are on the opposite components of the barrier $\partial K$.
\end{prop}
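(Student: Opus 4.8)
The plan is to show that once $|v|$ exceeds the total impulse the force can deliver, the motion is essentially ballistic between collisions, so the trajectory cannot turn around inside $\mathrm{int}\,K$ and strike the same wall twice in succession. In dimension one the impact law (H0) reduces to the velocity reversal $\xx(s+)=-\xx(s)$ (equivalently $I(x(s),\xx(s))=-2\xx(s)$), so the \emph{speed} $|\xx|$ is preserved across every impact. The decisive quantity is therefore the speed, and the core of the argument is a uniform positive lower bound for it.

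First I would set $w(t):=|\xx(t)|$ and observe that $w$ is continuous on all of $[0,T]$: it is continuous on each open inter-impact interval because $x$ has an absolutely continuous derivative there, and it is continuous across each impact because reversal preserves the modulus. On each inter-impact interval $w$ is absolutely continuous with $|w'(t)|\le|\xxx(t)|=|f(t,x(t))|\le m(t)$ for a.e.\ $t$ by (H1). Integrating on each interval and chaining the estimates via the triangle inequality gives the global bound $\bigl|\,w(t)-|v|\,\bigr|\le\int_0^t m(s)\,ds\le\int_0^T m(s)\,ds$ for every $t\in[0,T]$. Hence, choosing $|v|>\int_0^T m(s)\,ds$, one obtains $w(t)\ge|v|-\int_0^T m(s)\,ds>0$ throughout.

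With this lower bound the conclusion is almost immediate. Since $\xx$ is continuous between impacts and $|\xx|=w>0$ there, $\xx$ keeps a constant sign on each inter-impact interval, so $x$ is strictly monotone between consecutive impacts. Positivity of the speed at the walls also rules out tangential contact, so there is no sliding and each collision is a genuine reflection. Thus if $x(t_k)=a$, then right after reflection $\xx(t_k+)<0$, $x$ strictly decreases, and the first value in $\{-a,a\}$ it can again attain is $-a$; the case $x(t_k)=-a$ is symmetric. This yields $x(t_k)x(t_{k+1})=-a^2<0$ for every $k\ge1$.

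The main obstacle is not the final monotonicity step but the bookkeeping behind the global speed estimate: one must carefully justify that $w$ is genuinely continuous across impacts and that the integral bound survives summation over possibly many inter-impact intervals. I would also note in passing that the same bounds prevent a Zeno-type accumulation of impacts, since each crossing of the slab takes time at least $2a/\bigl(|v|+\int_0^T m(s)\,ds\bigr)>0$; hence there are only finitely many impacts on $[0,T]$ and the induction on $k$ is well posed. This last point, however, is secondary to the reflection-alternation claim itself.
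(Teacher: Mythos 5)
Your proposal is correct and follows essentially the same route as the paper: both arguments rest on the uniform lower bound $|\xx(t)|\ge |v|-\|m\|_1>0$, obtained by integrating $|\xxx|\le m$ over the inter-impact intervals and using that the reflection preserves the speed. You merely make explicit two points the paper leaves implicit (the strict monotonicity of $x$ between impacts forcing alternation of walls, and the non-accumulation of impact times), which is a harmless elaboration rather than a different method.
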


\begin{proof}
It is sufficient to check that $|\xx(t)|\geq d$ for some $d>0$ and every $t\in [0,T]$, if $v$ is sufficiently big. But we know, that
\[\xx(t)=\xx(0)+\int_0^t\xxx(s) ds=v+\int_0^tf(s,x(s))ds,\]
up to the first impact time $t_1>0$, so
\[|\xx(t)|\geq |v|-\int_0^t|f(s,x(s))|ds\geq |v|-\int_0^t m(s)ds.\]
Furthermore, $|\xx(t_1+)|=|\xx(t_1)|$, and
\[\xx(t)=\xx(t_1+)+\int_{t_1}^t f(s,x(s))ds\]
for $t\in (t_1,t_2]$, where $t_2$ is the second impact time. Hence, for $t\in [0,t_2]$,
\[|\xx(t)|\geq |\xx(t_1+)|-\int_{t_1}^t m(s)ds\geq |v|-\int_0^t m(s)ds.\]
We proceed up to $T$, and obtain
\[|\xx(t)|\geq d:=|v|-||m||_1>0,\]
for every $t\in [0,T]$ and $|v|>||m||_1$, where $||m||_1$ is an $L^1$-norm of $m$.
\end{proof}

To simplify our considerations, in the rest of the section we will assume that
\renewcommand{\labelenumi}{(H2)}
\begin{enumerate}
\item the right-hand side $f$ is Lipschitz with respect to the second variable, i.e.,
 \[|f(t,x)-f(t,y)|\leq \alpha(t)|x-y| \mbox{ for a.e. $t\in [0,T]$ and some $\alpha\in L^1([0,T])$}.\]
\end{enumerate}

We are in a position to formulate the following

\begin{thm}\label{thm-dim1}
Let $f$ satisfy {\em (H1)} and {\em (H2)}, and let $x_v, x_w$ be two solutions of \eqref{problem-cauchy} with initial velocities $v, w>||m||_1$, and impulse times $t_1,\ldots,t_k$ and $s_1,\ldots,s_{k+j}$, respectively. If $j\geq 2$, then there exists a solution of \eqref{problem1}.
\end{thm}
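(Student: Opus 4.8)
The plan is to fix the interval $J=[\min\{v,w\},\max\{v,w\}]$ of admissible initial velocities and to study the terminal map $P(u):=x_u(T)$, where $x_u$ denotes the solution of \eqref{problem-cauchy} with $\xx_u(0)=u$. Every $u\in J$ satisfies $u>||m||_1$, so Proposition \ref{prop1} applies uniformly: one has $|\xx_u(t)|\ge u-||m||_1>0$ on $[0,T]$, hence between impacts the velocity keeps a constant sign, the first impact of each $x_u$ occurs on the upper wall $x=a$, and consecutive impacts alternate between $x=a$ and $x=-a$. In particular the $n$-th impact of any $x_u$ lies on the wall $(-1)^{n+1}a$. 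Since the time between two successive impacts is bounded below by $2a/(u+||m||_1)>0$, each $x_u$ has finitely many impacts and $P$ is well defined on $J$; I write $N(u)$ for the number of impacts of $x_u$ in $[0,T]$, so that $N(v)=k$ and $N(w)=k+j$.

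The main obstacle is to prove that $P$ is continuous on $J$. On a maximal subinterval where $N$ is constant this follows from continuous dependence on initial data together with the implicit function theorem: each impact time $t_i(u)$ is determined by $x_u(t_i)=\pm a$ with $\xx_u(t_i)\ne0$ by Proposition \ref{prop1}, so the crossing is transversal and $t_i(\cdot)$ is continuous. The delicate points are the critical velocities $c$ at which $N$ jumps, and here transversality is again decisive. Because $|\xx_u|\ge d>0$, no impact can be created tangentially in the interior of $[0,T]$ (that would force $\xx_u=0$ on $\partial K$), and because $x_u(0)=0\in int\,K$, an impact can enter or leave the time window only through the right endpoint $t=T$. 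Consequently $N$ changes only by $\pm1$, precisely when one impact time passes through $T$; since the position $x_u$ is continuous across impacts (only the velocity jumps), letting $t_n(u)\to T$ forces $P(u)\to(-1)^{n+1}a$ from both sides, and $P(c)=(-1)^{n+1}a$ as well. Thus $P$ is continuous on $J$ and $N(\cdot)$ changes by unit steps.

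With continuity in hand the conclusion is a sign-change argument using the hypothesis $j\ge2$. Since $N(v)=k$, $N(w)=k+j$ and $N$ moves by unit steps, it attains every integer between $k$ and $k+j$; as $j\ge2$, both the transition $k\to k+1$ and the transition $k+1\to k+2$ occur at some velocities $c,c'\in J$. At $c$ the $(k+1)$-th impact sits exactly at $T$, so by the alternation of walls $P(c)=(-1)^{k}a$; at $c'$ the $(k+2)$-th impact sits at $T$, so $P(c')=(-1)^{k+1}a$. These two values are $+a$ and $-a$ in some order, hence of opposite sign. By the continuity of $P$ and the intermediate value theorem there exists $u_0$ between $c$ and $c'$ with $P(u_0)=x_{u_0}(T)=0$; since also $x_{u_0}(0)=0$, the trajectory $x_{u_0}$ is a solution of \eqref{problem1}. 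Note that exactly $j\ge2$ is what guarantees the second transition $k+1\to k+2$, and hence critical velocities realizing both walls $+a$ and $-a$; with $j=1$ only one parity of terminal wall would be forced and no sign change could be ensured.
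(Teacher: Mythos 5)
Your proposal is correct and follows essentially the same route as the paper: parametrize the initial velocity along the segment from $v$ to $w$, use continuity of the terminal map $u\mapsto x_u(T)$ (the paper isolates this as Lemma~\ref{cont}), note via Proposition~\ref{prop1} that the two transitions in the impact count forced by $j\ge 2$ put the terminal value on opposite walls, and conclude by the intermediate value theorem. The only difference is presentational: you re-derive the continuity of the terminal map inline (including the behaviour at critical velocities where an impact time crosses $T$), whereas the paper delegates exactly this to Lemma~\ref{cont}.
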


The key point in the proof of this theorem can be formulated as

\begin{lm}\label{cont}
Let $f$ satisfy {\em (H1)} and {\em (H2)}, and let $b>||m||_1$. Then the operator $\V:\rr\to [-r,r]$, $\V(v):=x_v(T)$, is continuous in $b$.
\end{lm}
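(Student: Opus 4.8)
The plan is to realize $x_v$ as a finite composition of three elementary operations, each continuous in its data: the free flow governed by $\xxx=f(t,x)$, the first-hitting-time map onto $\partial K$, and the velocity reflection $\xx\mapsto-\xx$. The one place where such a composition can fail — a change in the number of impacts as $v$ varies — turns out not to destroy continuity of the endpoint, and isolating this is the heart of the argument.

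First I would localize: fix $\eta>0$ with $b-\eta>\|m\|_1$ and work with $v\in[b-\eta,b+\eta]$. The computation in the proof of Proposition~\ref{prop1} gives $|\xx_v(t)|\ge d_0:=b-\eta-\|m\|_1>0$ on $[0,T]$, while $|\xx_v(t)|\le C:=b+\eta+\|m\|_1$ holds throughout (reflections preserve speed). The lower bound makes $x_v$ strictly monotone between impacts, so reflections alternate between the walls $a$ and $-a$ and each inter-impact arc crosses the full width $2a$; together with the upper bound this forces a minimal gap $2a/C>0$ between consecutive impacts, whence the number of impacts in $[0,T]$ is bounded by a fixed integer, uniformly for $v$ near $b$. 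Thus only finitely many elementary maps are ever composed.

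Next I would record the continuity of the building blocks. Continuous (indeed Lipschitz, via Gronwall) dependence of the free flow on $(t_0,x_0,\xx_0)$ is standard under (H1)--(H2). For the hitting time the decisive input is transversality: an arc meets the level $\pm a$ with $|\xx_v|\ge d_0>0$, so $x_v(t^\ast-\varepsilon)$ and $x_v(t^\ast+\varepsilon)$ lie strictly on opposite sides of the wall for small $\varepsilon$; continuous dependence of the flow places every nearby trajectory on the same two sides, trapping its hitting time in $(t^\ast-\varepsilon,t^\ast+\varepsilon)$. Reflection is trivially continuous. Feeding each post-impact state into the next flow, I obtain that the state $(t_i(v),x_v(t_i),\xx_v(t_i+))$ at the $i$-th impact depends continuously on $v$, so long as the impacts remain transversal and strictly interior to $(0,T)$.

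The main obstacle is the behaviour at $t=T$, where the combinatorial type of $x_b$ may change under perturbation of $v$. If $|x_b(T)|<a$, all impacts of $x_b$ lie strictly before $T$ and its final arc is bounded away from the walls near $T$; the composition above is then valid, with a constant number of impacts, on a whole neighbourhood of $b$, and $\V$ is continuous at $b$ by composing continuous maps. The delicate case is the grazing one, $|x_b(T)|=a$, say $x_b(T)=a$: here nearby trajectories may gain or lose one impact near $T$, so the impact count is unstable. I would resolve it by exploiting that position is continuous across impacts (only $\xx$ jumps) and that the family $\{x_v\}$ is uniformly Lipschitz with constant $C$. Letting $t_k$ be the last impact strictly before the grazing time, the states up to $t_k$ vary continuously, and on $[t_k,T-\delta]$ the arc $x_b$ stays below $a$ by a positive margin, so for $v$ close to $b$ no extra impact occurs there and $x_v\to x_b$ uniformly on $[0,T-\delta]$. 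Combining $x_v(T)\le a$, the uniform Lipschitz bound $|x_v(T)-x_v(T-\delta)|\le C\delta$, and $x_b(T-\delta)\ge a-C\delta$ pins $x_v(T)$ within $2C\delta$ of $a$ for every small $\delta$; hence $x_v(T)\to a=x_b(T)$, which is the required continuity.
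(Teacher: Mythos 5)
Your proof is correct and follows essentially the same route as the paper's: continuous dependence of the non-impulsive flow, combined with transversality of every impact (guaranteed by the uniform speed bound $|\xx_v(t)|\ge d_0>0$ from Proposition~\ref{prop1}), is propagated through the finitely many reflections, and the only delicate point is an impact occurring exactly at $t=T$. Your squeeze argument for that terminal case, based on the uniform Lipschitz bound $|\xx_v(t)|\le C$ and $x_v(T)\le a$, is in fact spelled out more explicitly than the paper's brief remark that one may extend the interval to $[0,T+1]$ and repeat the argument.
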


\begin{proof}
Assume that $x_b$ has $k$ impulse times $t_1(b),\ldots,t_k(b)$, and $t_k(b)\neq T$, i.e., $x_b(T)\in (-r,r)$. The case $|x_b(T)|=r$, very similar, is considered at the end of the proof. In the sequel, by $x_{(s,p,v)}$ we will denote the unique solution of the nonimpulsive Cauchy problem
\begin{equation}\label{cauchy1}
 \left\{ \begin{array}{ll}
\xxx(t)=f(t,x(t)), & \mbox{for } t\in [0,T],\\
x(s)=p,\\
\xx(s)=v.
\end{array}\right.
\end{equation}

\noindent {\em Step 1.} We show that
\begin{equation}\label{aim1}
\begin{array}{l}
\text{for every $\eta>0$ there exists $\delta>0$ such that, for $b'\in (b-\delta,b+\delta)$,}\\
\text{the solution $x_{b'}$ has exactly $k$ impacts, and $|t_i(b')-t_i(b)|<\eta$ for any}\\
\text{$i\in \{1,\ldots,k\}$.}
\end{array}
\end{equation}

\noindent Take any $\eta>0$ so small that $(t_i(b)-\eta,t_i(b)+\eta)\cap (t_{i+1}(b)-\eta,t_{i+1}(b)+\eta)=\emptyset$ for $i=1,\ldots,k-1$, and assume, without any loss of generality, that $x_b(t_k(b))=-r$. Then (see Proposition \ref{prop1}) $\xx_b(t)<0$ for every $t\in (t_{k-1}(b),t_k(b))$, and $x_b(t_{k-1}(b))=r$. Since $\xx_b(t_{k-1}(b)+)<0$, one has $x_{(t_{k-1}(b),r,\xx(t_{k-1}(b)+))}(t)<-r$ for any $t>t_k(b)$ close to $t_k(b)$. Take $\bar t\in (t_k(b),t_k(b)+\eta)$ such that $x_{(t_{k-1}(b),r,\xx(t_{k-1}(b)+))}(\bar t)<-r$. By the continuous dependence on initial conditions for \eqref{cauchy1}, there exists $0<\delta_1(k)<1$ such that the inequalities
\begin{equation}\label{ine-k}
|s-t_{k-1}(b)|<\delta_1(k), \ \ |p-r|<\delta_1(k), \ \ |v-\xx_b(t_{k-1}(b)+)|<\delta_1(k)
\end{equation}
imply that the solution $x_{(s,p,v)}$ satisfies $x_{(s,p,v)}(\bar t)<-r$. Hence, it has an impact in some $t_k<\bar t<t_k(b)+\eta$.

On the other hand, $x_b([t_{k-1}(b),t_k(b)-\eta])\subset (-r,r]$ which implies that there exists $\delta_2(k)>0$ such that $\delta_2(k)\leq \delta_1(k)$ and inequalities
\eqref{ine-k} imply that $||x_{(s,p,v)}-x_{(t_{k-1}(b),r,\xx_b(t_{k-1}(b)+))}||_{C^1([0,T])}<\eta$ and $x_{(s,p,v)}([t_{k-1}(b),t_k(b)-\eta])\subset (-r,\infty)$. Thus $t_k>t_k(b)-\eta$.

Now we start to analyze the behavior of solutions near the point $(t_{k-1}(b),r)$. Choose $0<\delta_3(k)<1$ such that
\begin{equation}\label{delta3}
\delta_3(k)<\frac{\delta_2(k)}{2(||\xx_b||+1)} \ \mbox{ and } \ 2\int_{t_{k-1}(b)-\delta_3(k)}^{t_{k-1}(b)+\delta_3(k)}m(s) ds<\frac{\delta_2(k)}{2}.
\end{equation}
Analogously as above we find $\delta_2(k-1)>0$ such that the inequalities
\begin{equation}\label{ine-k-1}
|s-t_{k-2}(b)|<\delta_2(k-1), \ \ |p+r|<\delta_2(k-1), \ \ |v-\xx_b(t_{k-2}(b)+)|<\delta_2(k-1)
\end{equation}
imply that $||x_{(s,p,v)}-x_{(t_{k-2}(b),-r,\xx_b(t_{k-2}(b)+))}||_{C^1([0,T])}<\delta_3(k)$ and $x_{(s,p,v)}$ has an impact in some $t_{k-1}\in (t_{k-1}(b)-\delta_3(k),t_{k-1}(b)+\delta_3(k))$. Then, after the impact, we obtain a function denoted by $\tilde x$ such that $\dot{\tilde x}(t)<0$  and, since $\delta_3(k)<1$,
\[|\dot{\tilde x}(\tau)|\leq |\xx_b(\tau)|+1\leq ||\xx_b||+1,\]
for every $\tau\in (t_{k-1},t_{k-1}(b)+\delta_3(k))$.

Consequently,
\begin{eqnarray*}
r&\geq &\tilde x(t_{k-1}(b)+\delta_3(k))=\tilde x(t_{k-1})+\int_{t_{k-1}}^{t_{k-1}(b)+\delta_3(k)} \dot{\tilde x}(\tau) d\tau\\
&>& r- \int_{t_{k-1}}^{t_{k-1}(b)+\delta_3(k)} |\dot{\tilde x}(\tau)| d\tau>r-2\delta_3(k)(||\xx_b||+1)>r-\delta_2(k).
\end{eqnarray*}
Moreover, assuming without any loss of generality that $t_{k-1}\leq t_{k-1}(b)$,
\begin{eqnarray*}
\lefteqn{|\dot{\tilde x}(t_{k-1}(b)+\delta_3(k))-\xx_b(t_{k-1}(b)+\delta_3(k))|=}\\
& &=\left|\dot{\tilde x}(t_{k-1}+)+\int_{t_{k-1}}^{t_{k-1}(b)+\delta_3(k)} f(\tau,\tilde x(\tau)) d\tau\right.\\
& &\left.-\xx_b(t_{k-1}(b)+)-\int_{t_{k-1}(b)}^{t_{k-1}(b)+\delta_3(k)} f(\tau,x_b(\tau)) d\tau\right|\\
& &=\left|-\dot{\tilde x}(t_{k-1})+\int_{t_{k-1}}^{t_{k-1}(b)+\delta_3(k)} f(\tau,\tilde x(\tau)) d\tau\right.\\
& &\left.+\xx_b(t_{k-1}(b))-\int_{t_{k-1}(b)}^{t_{k-1}(b)+\delta_3(k)} f(\tau,x_b(\tau)) d\tau\right|\\
& &=\left|-\dot{\tilde x}(t_{k-1})+\int_{t_{k-1}}^{t_{k-1}(b)+\delta_3(k)} f(\tau,\tilde x(\tau)) d\tau\right.\\
& &\left.+\xx_b(t_{k-1})+\int_{t_{k-1}}^{t_{k-1}(b)} f(\tau,x_b(\tau)) d\tau-\int_{t_{k-1}(b)}^{t_{k-1}(b)+\delta_3(k)} f(\tau,x_b(\tau)) d\tau\right|\\
& &\leq |\dot{\tilde x}(t_{k-1})-\xx_b(t_{k-1})|+\int_{t_{k-1}}^{t_{k-1}(b)}|f(\tau,\tilde x(\tau)) +  f(\tau,x_b(\tau))| d\tau\\
& &+ \int_{t_{k-1}(b)}^{t_{k-1}(b)+\delta_3(k)} |f(\tau,\tilde x(\tau)) -  f(\tau,x_b(\tau))| d\tau\\
& &\leq |\dot{\tilde x}(t_{k-1})-\xx_b(t_{k-1})|+2\int_{t_{k-1}}^{t_{k-1}(b)+\delta_3(k)}m(\tau) d\tau<\frac{\delta_2(k)}{2}+\frac{\delta_2(k)}{2}=\delta_2(k).
\end{eqnarray*}
We proceed, and find, at last, $\delta:=\delta_2(1)\in (0,1)$ such that $||x_{(0,0,b')}-x_{(0,0,b)}||_{C^1([0,T])}<\delta_3(2)$ for any $b'\in (b-\delta_2(1), b+\delta_2(1))$, and $x_{(0,0,b')}$ has the first impact $t_1$ in $(t_1(b)-\delta_3(2),t_1(b)+\delta_3(2))$, where
\[\delta_3(k)<\frac{\delta_2(k)}{2(||\xx_b||+1)} \ \mbox{ and } \ 2\int_{t_{k-1}(b)-\delta_3(k)}^{t_{k-1}(b)+\delta_3(k)}m(s) ds<\frac{\delta_2(k)}{2}.
\]
Then, for $b'\in (b-\delta_2(1), b+\delta_2(1))$, the solution $x_{b'}$ of problem \eqref{problem-cauchy} has exactly $k$ impacts, and $|t_i(b')-t_i(b)|<\eta$ for any $i\in \{1,\ldots,k\}$.

A proof for $x_b(t_k(b))=r$ is the same.

\noindent {\em Step 2.} Take an arbitrary $\eps>0$. Analogously as in Step 1 we assume that $x_b(t_k(b))=-r$, and we choose $0<\theta<1$ such that the inequalities
\[|s-t_{k}(b)|<\theta, \ \ |p+r|<\theta, \ \ |v-\xx_b(t_{k}(b)+)|<\theta
\]
imply that $||x_{(s,p,v)}-x_{(t_k(b),-r,\xx_b(t_k(b)+))}||<\eps$. In consequence, $||x_{(s,p,v)}(T)-x_{(t_k(b),-r,\xx_b(t_k(b)+))}(T)||<\eps$.

Choose $0<\eta<\theta$ such that
\[\eta<\frac{\theta}{2(||\xx_b||+1)} \ \mbox{ and } \ 2\int_{t_{k}(b)-\eta}^{t_{k}(b)+\eta}m(s) ds<\frac{\theta}{2},
\]
and apply \eqref{aim1} to find $\delta>0$. Now, like in Step 1, we can check that, for every $b'\in (b-\delta,b+\delta)$, the solution $x_{b'}$ satisfies
\[|t_k(b')-t_k(b)|<\theta, \ \ |\xx_{b'}(t_k(b')+)-x_b(t_k(b)+)|<\theta.\]
Thus $|x_{b'}(T)-x_b(T)|<\eps$.\vspace{2mm}

To finish the proof of the lemma it is sufficient to notice that the case $t_k(b)= T$ is quite similar. The only difference is that, for $b'$ close to $b$, the solution $x_{b'}$ can have only $k-1$ impulses. But still this solution considered on a little larger interval, e.g. $[0,T+1]$, must have an impulse near $T$, and the above proof arguments work.
\end{proof}

\begin{rem}
\begin{em}
One can prove the continuity of $V_T$ without the assumption $b>||m||_1$, because of the continuity of the impulse function $I(z)=-2z$, but a formulation of Lemma \ref{cont} is sufficient for our considerations.
\end{em}
\end{rem}

Now we are able to prove Theorem \ref{thm-dim1}.

\begin{proof}
  We will show that there exists $u(\lambda)=(1-\lambda)v+\lambda w$ ($\lambda\in [0,1]$) such that the solution $x_{u(\lambda)}$ of \eqref{problem-cauchy} with the initial velocity $u(\lambda)$ solves \eqref{problem1}.

Without any loss of generality, assume that $x_v(t_k)=r$. Take $\lambda\nearrow 1$. Then $x_{u(\lambda)}(T)$ attains the barrier twice (on opposite sides), at least, because new impacts occur on different parts of the barrier (see Proposition \ref{prop1}). From the Darboux theorem, since $x_{u(\lambda)}(T)$ depends continuously on $\lambda$ (see Lemma \ref{cont}), there exists $\lambda\in [0,1]$ such that $x_{u(\lambda)}(T)=0$. The proof is complete.
\end{proof}

We finish our consideration with the observation that, indeed, there are initial velocities $v$ and $w$ satisfying assumptions of the above theorem.

\begin{thm}\label{thm-number}
Let $f$ satisfy {\em (H1)} and {\em (H2)}, $v\neq 0$, and $x_v$ be a solution of \eqref{problem-cauchy} with $k$ impulse times. Then there exists $c>0$ such that the solution $x_{cv}$ of \eqref{problem-cauchy} has at least $k+2$ impulse times.
\end{thm}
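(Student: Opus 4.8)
The plan is to show that the number of impacts of $x_{cv}$ on $[0,T]$ grows without bound as $c\to\infty$, so that any prescribed number of impacts---in particular $k+2$---is reached once $c$ is large enough; the value of $k$ plays no role beyond fixing the target. Throughout write $K=[-r,r]$ (so $a=r$ in the notation of Proposition \ref{prop1}), and note that assumption (H2) guarantees unique solvability of the non-impulsive Cauchy problem between impacts, so that, as long as the impacts are transversal, the billiard solution $x_{cv}$ is uniquely determined by its initial velocity $cv$ and its impact times are well defined.

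First I would fix $c>0$ so large that $c|v|>||m||_1$ and invoke the estimate inside the proof of Proposition \ref{prop1}: for such $c$ one has $|\xx_{cv}(t)|\ge d:=c|v|-||m||_1>0$ for every $t\in[0,T]$. This lower bound does double duty: (i) it makes every impact transversal---the trajectory reaches each barrier with nonzero speed and is reflected to $-\xx$, so it genuinely leaves the barrier and does not slide, whence $x_{cv}$ is well defined; and (ii) it forces the motion to be strictly monotone between impacts, since $\xx_{cv}$ cannot change sign. Combined with Proposition \ref{prop1}, which places consecutive impacts on opposite components of $\partial K$, this means that between two successive impacts the particle traverses the full distance $2r$, while before the first impact it traverses the distance $r$ from $x(0)=0$ to the barrier.

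The key step is an upper bound on the inter-impact times. Since $x_{cv}$ is monotone between impacts with speed at least $d$, on the first leg $r=\int_0^{t_1}|\xx_{cv}|\,dt\ge d\,t_1$, whence $t_1\le r/d$, and on each later leg $2r=\int_{t_i}^{t_{i+1}}|\xx_{cv}|\,dt\ge d\,(t_{i+1}-t_i)$, whence $t_{i+1}-t_i\le 2r/d$. Telescoping gives $t_i\le (2i-1)r/d\le 2ir/d$. Hence the $i$-th impact occurs within $[0,T]$ whenever $2ir/d\le T$, i.e. for every $i\le Td/(2r)$, so the number $N(c)$ of impacts of $x_{cv}$ on $[0,T]$ satisfies
\[
N(c)\ \ge\ \left\lfloor \frac{Td}{2r}\right\rfloor\ =\ \left\lfloor \frac{T\,(c|v|-||m||_1)}{2r}\right\rfloor .
\]
The right-hand side tends to $+\infty$ as $c\to\infty$, so choosing $c$ with $T(c|v|-||m||_1)/(2r)\ge k+2$---for instance any $c\ge \tfrac{1}{|v|}\bigl(\tfrac{2r(k+2)}{T}+||m||_1\bigr)$---yields $N(c)\ge k+2$, which is the assertion.

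The main obstacle is not the counting, which is elementary once the inter-impact time bound is in hand, but justifying that bound, i.e. that the dynamics really consists of transversal barrier-to-barrier legs of length $2r$. This rests entirely on the uniform speed lower bound $|\xx_{cv}|\ge d>0$ extracted from Proposition \ref{prop1}: it is what rules out grazing or sliding along $\partial K$ and secures strict monotonicity between impacts. A minor point to tidy up is the behaviour at the right endpoint (an impact landing exactly at $t=T$); this is handled exactly as the case $t_k(b)=T$ in Lemma \ref{cont}, or simply avoided by taking the inequality $Td/(2r)\ge k+2$ strict. Note finally that (H2) is used only to make $x_{cv}$ a single well-defined trajectory; the quantitative estimates need only the integrable bound $m$ from (H1).
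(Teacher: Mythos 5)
Your proof is correct, but it follows a genuinely different route from the paper's. Both arguments rest on the same core estimate from the proof of Proposition \ref{prop1}, namely $|\xx_{cv}(t)|\geq c|v|-||m||_1=:d$ for all $t$, which forces monotone barrier-to-barrier legs and hence an inter-impact time of at most $2r/d$. From there you count globally: the number of impacts on $[0,T]$ is at least $\lfloor Td/(2r)\rfloor$, which tends to infinity with $c$, so the value of $k$ only fixes the threshold for $c$ and the trajectory $x_v$ itself plays no further role. The paper instead argues locally and comparatively: it divides $[0,t_1(v)]$ into three equal subintervals to extract three impacts of $x_{cv}$ before the first impact of $x_v$, and then enlarges $c_0$ so that $|\xx_{cv}(t)|>|\xx_v(t)|$ everywhere, which lets it run a pacing argument $t_{3+i}(cv)<t_{1+i}(v)$ to conclude $t_{k+2}(cv)<t_k(v)\leq T$. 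Your version is more elementary — it avoids the somewhat delicate comparison of impact times between the two trajectories — and it delivers an explicit quantitative lower bound $N(c)\geq\lfloor T(c|v|-||m||_1)/(2r)\rfloor$ that immediately yields the multiplicity statement of Theorem \ref{multi} with no modification. What the paper's comparison buys in exchange is finer structural information (the new impacts are located before $t_1(v)$ and the subsequent ones interlace with those of $x_v$), which is the form in which the result is reused later. Your handling of transversality and of an impact landing exactly at $t=T$ is appropriate and consistent with how the paper treats the analogous case in Lemma \ref{cont}.
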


\begin{proof}
Let $t_1(v),\ldots,t_k(v)$ be impulse times for $x_v$. Divide the interval $[0,t_1(v)]$ into three  intervals of equal length.

At first, we denote by $\bar{x}_v$ and $\bar{x}_{cv}$ the solutions of a nonimpulsive Cauchy problem $\xxx =f(t,x), x(0)=0$, and show that there is $c_0>0$ such that $|\bar{x}_{cv}(t_2)-\bar{x}_{cv}(t_1)|>2r$ for $c>c_0$ and for each $t_1, t_2\in [0,T]$ with $|t_2-t_1|\geq t_1(v)/3$.

Indeed, if $cv$ is an initial velocity, then
\[
|\xx_{cv}(t)|\geq |\xx_{cv}(0)| - \int_0^t|m(s)| ds\geq c|v|-||m||_1,
\]
for every $t\in [0,T]$, see the proof of Proposition \ref{prop1}. Take $c_1>0$ such that $c_1|v|-||m||_1>0$, and let $c\geq c_1$. Then
\[||\bar{x}_{cv}(t_2)-\bar{x}_{cv}(t_1)|=\int_{t_1}^{t_2} |\xx_{cv}(s)| ds\geq (c|v|-||m||_1)\frac{1}{3}t_1(v).\]
It is sufficient to find $c_0\geq c_1>0$ such that $\frac{1}{3}(c_0|v|-||m||_1)t_1(v)>2r$. It is easy to see that $c_0>\frac{||m||_1}{|v|}+\frac{6r}{|v|t_1(v)}$.

Now, if $c\geq c_0$, then the solution $x_{cv}$ has an impulse time $t_1(cv)$ in $[0,t_1(v)/3)$. Furthermore, we have the second impulse time $t_2(cv)$ in $[t_1(cv),t_1(cv)+t_1(v)/3)$ and the third one $t_3(cv)\in [t_2(cv),t_2(cv)+t_1(v)/3)$. Thus $t_3(cv)<t_1(v)$.

We can choose $c_0$ so big that
\begin{equation}\label{eqthm1}
|\xx_{cv}(t)|>|\xx_v(t)| \ \ \mbox{ for every } t\in [0,T].
\end{equation}
 Indeed, we know that $|\xx_{cv}(t)|\geq c|v|-||m||_1$ for every $t\in [0,T]$. Similarly we check that $|\xx_v(t)|\leq |v|+||m||_1$ for every $t\in [0,T]$. So, it is sufficient to have $c_0|v|-||m||_1>|v|+||m||_1$ which gives $c_0> (2||m||_1+|v|)/|v|$.

Without any loss of generality we can assume that $\xx_{cv}(t)>\xx_v(t)>0$ up to the first impulse time $t_1(cv)$. Obviously,
\[\bar{x}_{cv}(t_1(v))=\int_0^{t_1(v)}\dot{\bar{x}}_{cv}(s) ds > \int_0^{t_1(v)}\xx_{v}(s) ds =r,\]
hence, $x_{cv}(t_1(cv))=r$ (the upper part $x=a$ of the barrier). From the previous considerations we know that $x_{cv}(t_2(cv))=-r$ and $x_{cv}(t_3(cv))=r$. From \eqref{eqthm1} it follows that
\[t_{3+i}(cv)<t_{1+i}(v) \ \ \mbox{ for every } i\geq 0.\]
Indeed, for $i=1$, if $t_4(cv)\leq t_1(v)$, then $t_4(cv)<t_2(v)$. If $t_4(cv)> t_1(v)$, then $x_{cv}(t_1(v))\in (-r,r)$, and $\xx_{cv}(t_1(v)+)$ and $\xx_{v}(t_1(v)+)$ are negative. Moreover, $\xx_{cv}(t)<\xx_{v}(t)$ for $t>t_1(v)$ up to the next impulse time. Thus, again, $t_4(cv)<t_2(v)$. For $i>1$ we proceed analogously. In consequence, $t_{i+2}(cv)<t_i(v)$.

\end{proof}

Besides giving the existence of solutions of \eqref{problem1}, the above theorem implies their multiplicity, as we can see in the following.

\begin{thm}\label{multi}
If $f$ satisfies {\em (H1)} and {\em (H2)}, then there exist infinitely many solutions of \eqref{problem1}.
\end{thm}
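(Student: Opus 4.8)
The plan is to combine the two previously established results into an inductive construction of infinitely many distinct solutions. Theorem \ref{thm-number} lets me inflate the number of impulses, while Theorem \ref{thm-dim1} converts a suitable ``gap'' of at least two in the impulse counts into an actual solution of \eqref{problem1}. The strategy is to produce an increasing sequence of solutions whose numbers of impact points are strictly increasing, so that they are pairwise distinct.

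First I would fix any $v_0$ with $v_0>\|m\|_1$ (so that Proposition \ref{prop1} applies and consecutive impacts alternate sides of the barrier), and let $k_0$ be the number of impulse times of $x_{v_0}$. By Theorem \ref{thm-number} there exists $c_0>0$ such that $x_{c_0 v_0}$ has at least $k_0+2$ impulse times; set $w_0:=c_0 v_0$. Then $v_0$ and $w_0$ are two initial velocities exceeding $\|m\|_1$ whose impulse counts differ by $j\geq 2$, so the hypotheses of Theorem \ref{thm-dim1} are met and we obtain a first solution of \eqref{problem1}, realized as some $x_{u}$ with $u$ between $v_0$ and $w_0$ and with a certain number $n_1$ of impacts.

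To get infinitely many, I would iterate. Having produced a solution with $n_i$ impacts coming from an interval of velocities whose right endpoint is some $w_{i-1}$, I apply Theorem \ref{thm-number} again to $w_{i-1}$ to find a velocity $w_i$ whose solution has at least two more impacts than $x_{w_{i-1}}$, and then apply Theorem \ref{thm-dim1} to the pair $(w_{i-1},w_i)$ to extract a new solution $x_{u_i}$ of \eqref{problem1}. The main obstacle, and the point I would argue most carefully, is \emph{distinctness}: I must ensure the solutions produced at different stages are genuinely different, not the same function reappearing. The natural invariant is the number of impact points, which is finite for each solution and, by repeatedly inflating velocities via Theorem \ref{thm-number}, can be forced to grow without bound; since Darboux (as used in the proof of Theorem \ref{thm-dim1}) locates a solution whose impulse count lies between those of the two boundary trajectories, by choosing the velocity intervals to have strictly increasing left endpoints I can guarantee each new solution has strictly more impacts than all previous ones.

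Alternatively, and perhaps more cleanly, I would invoke the mechanism already visible in Example \ref{ex1}, where solutions with $3,7,\ldots$ impact points arise as $|v|\to\infty$: combining Theorem \ref{thm-number} (impulse count $\to\infty$ as $c\to\infty$) with the continuity of $\V$ (Lemma \ref{cont}) and the Darboux argument, one obtains for each large impulse count a value of $\lambda$ with $x_{u(\lambda)}(T)=0$. Because the number of impulses is a discrete invariant that tends to infinity along the family, infinitely many of these solutions must be pairwise distinct, which is exactly the claim.
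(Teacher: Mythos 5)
Your proposal is correct and follows essentially the same route as the paper: the paper's own (very terse) proof likewise uses the velocity--inflation mechanism of Theorem~\ref{thm-number} to force $2k$ additional impulses and then harvests the resulting zeros of $\V$ via the Darboux argument of Theorem~\ref{thm-dim1}. One small remark: you need not labour over impact counts to secure distinctness (monotonicity of the impulse count in $|v|$ is not actually guaranteed by Theorem~\ref{thm-number} for intermediate velocities), since two solutions of \eqref{problem1} arising from distinct initial velocities $u(\lambda)$ are automatically different functions --- their derivatives at $t=0$ differ --- so it suffices to arrange that the extracted velocities are pairwise distinct, e.g.\ by taking every other interval in your chain.
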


\begin{proof}
One can easily modify a proof of Theorem \ref{thm-number} to obtain a solution of problem \eqref{problem-cauchy} with $2k$ impulse times before $t_1(v)$ for each $k\geq 1$. For each $k$ we get at least $k$ different solutions of problem \eqref{problem1}, and the proof is complete.

\end{proof}


\section{Multidimensional billiards. Perspectives of research}

A multidimensional case is much more complicated, and only some special situations are investigated below. We give ideas how one could solve the problem and indicated some difficulties. Some ideas are formulated as open problems. Hence, we treat the section as an impulse to do deeper research. We start, like in the previous section, with an inspiring example.

\begin{ex}\label{mul-ex1}
\begin{em}

Let the billiard space be equal $K=\overline{B(0,r)}\subset\rr^2$, the closed ball with a center $0$ and radius $r>0$. Consider the Dirichlet problem
\begin{equation}\label{mul-ex1-eq1}
 \left\{ \begin{array}{ll}
\xxx(t)=a:=(a_1,a_2), & \mbox{for a.e. } t\in [0,T],\\
\xx(s+)=\xx(s)+I(x(s),\xx(s)), & \mbox{if } x(s)\in \partial K,\\
x(0)=x(T)=0,
\end{array}\right.
\end{equation}
where $I(x(s),\xx(s))=-\frac{2}{r^2}\langle x(s),\xx(s)\rangle x(s)$. Therefore the line $a_2x-a_1y=0$ is invariant under the flow generated by $\xxx(t)=a, x(0)=0, \xx(0)=(\alpha a_1,\alpha a_2)$. It implies that we can solve the problem \eqref{mul-ex1-eq1} as one-dimensional (see Example \ref{ex1}). Hence, there are infinitely many solutions of \eqref{mul-ex1-eq1}.
\end{em}
\end{ex}

The above example leads to the following result for simple billiards (with a uniform motion).

\begin{thm}\label{uniform}
Assume that $K\subset\rn$, $n=2k$, is a compact smooth (by `smooth' we mean, here and in the sequel, at least of class $C^2$) manifold with boundary, $0\in int K$, and $K$ is strongly star-shaped with respect to $0$, i.e., $tx\in int K$ for every $x\in K$ and $t\in [0,1)$ (see, e.g., \cite{dugr}, p. 77 for a definition of star-shaped sets). Then the Dirichlet problem
\begin{equation}\label{uniform-eq1}
 \left\{ \begin{array}{ll}
\xxx(t)=0, & \mbox{for a.e. } t\in [0,T],\\
\xx(s+)=\xx(s)+I(x(s),\xx(s)), & \mbox{if } x(s)\in \partial K,\\
x(0)=x(T)=0,
\end{array}\right.
\end{equation}
with $I$ satisfying the standard impact low (H0) (see Section \ref{intro}) has a nontrivial solution.
\end{thm}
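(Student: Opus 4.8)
The plan is to generalise the radial-shooting idea already used for the ball in Example~\ref{mul-ex1}. For a ball every diameter produces a trajectory that leaves $0$, strikes $\partial K$ orthogonally, and is reflected straight back to $0$; for a general strongly star-shaped $K$ an arbitrary radial ray will \emph{not} meet $\partial K$ orthogonally, but I claim there is always at least one direction for which it does. Such a direction yields a one-impact trajectory that retraces its own path, and after fixing the speed so that the flight time is exactly $T/2$ each way we obtain a nontrivial solution of \eqref{uniform-eq1}.

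First I would locate a boundary point whose position vector is normal to $\partial K$. Consider the smooth function $g(x):=\langle x,x\rangle$ restricted to the compact boundary manifold $\partial K$. Since $\partial K$ is compact, $g$ attains its maximum at some $p\in\partial K$, and because $0\in int\,K$ we have $|p|>0$. At a maximiser the gradient $\nabla g(p)=2p$ must be orthogonal to $T_p(\partial K)$, i.e. $p$ is a positive multiple of the outward unit normal $\nu(p)$. Thus the ray from $0$ in the direction $p$ hits $\partial K$ orthogonally at $p$.

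Next I would check that the corresponding billiard trajectory solves the problem. Set $v:=\tfrac{2}{T}\,p$ and start the uniform motion $\xxx=0$ from $x(0)=0$ with $\xx(0)=v$, so that $x(t)=\tfrac{2t}{T}\,p$ for $t\in[0,T/2]$. Strong star-shapedness gives $x(t)\in int\,K$ for $t\in[0,T/2)$, so $p$ is genuinely the \emph{first} impact point, reached at $t=T/2$. There the velocity $v$ is parallel to $\nu(p)$, hence purely normal, and the impact law (H0) (equal angles and preserved speed, i.e. $\xx\mapsto \xx-2\langle\xx,\nu\rangle\nu$) simply reverses it: $\xx(T/2+)=-v$. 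By the time-reversibility of $\xxx=0$ the second half retraces the first, $x(t)=\tfrac{2(T-t)}{T}\,p$ for $t\in[T/2,T]$, staying in $int\,K$ on $(T/2,T)$ and giving $x(T)=0$. Since $p\neq 0$ this solution is nontrivial.

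The two points that require care are exactly those singled out above: that the open segment $(0,p)$ avoids $\partial K$, which is precisely where strong star-shapedness enters, and that orthogonal incidence produces an \emph{exact} reversal under (H0), which is immediate once the velocity is known to be normal. I do not see the hypothesis $n=2k$ playing any role in this argument; the orthogonality point exists for $\partial K$ compact in every dimension, and the same construction even settles the one-dimensional uniform case behind Example~\ref{ex1}. I would therefore expect the parity assumption to be needed only for a more refined, degree-theoretic variant --- for instance, when one wishes to prescribe the geometric type of the orbit, to count solutions, or to carry the scheme over to the non-uniform equation $\xxx=f(t,x)$ --- rather than for the bare existence statement proved here.
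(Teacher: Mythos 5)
Your proof is correct and it reaches the same reduction as the paper: both arguments hinge on producing a point $z\in\partial K$ whose position vector is normal to $\partial K$, after which the radial trajectory $x(t)=\tfrac{2t}{T}z$ on $[0,T/2]$, reflected exactly to $-v$ by (H0) because the incoming velocity is purely normal, retraces itself and returns to $0$ at time $T$; strong star-shapedness keeps the open segment in $int\,K$ in both halves, exactly as you say. Where you genuinely differ is in how that point is found. The paper radially retracts $\partial B(0,r)$ onto $\partial K$, projects the outer normal $n(y)$ onto the tangent space of $S^{n-1}$ at $y/|y|$ to obtain a tangent vector field on $S^{n-1}$, and invokes the hairy ball theorem to get a zero of that field, which is precisely a boundary point with radial normal. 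You instead maximize $|x|^2$ on the compact hypersurface $\partial K$ and read the normality of the position vector off the Lagrange condition at the maximizer (with $|p|>0$ because $0\in int\,K$). Your route is more elementary and, as you correctly suspect, dimension-free: the hairy ball theorem forces a zero only when $S^{n-1}$ is even-dimensional, i.e.\ when $n$ is odd, which is actually in tension with the stated hypothesis $n=2k$ (for $K\subset\rr^2$ the circle $S^1$ carries a nowhere-vanishing tangent field, so the paper's vector-field argument yields nothing there, while your farthest-point argument still applies). So your observation that the parity assumption plays no role in bare existence is right, and your argument in fact covers cases the paper's own proof does not; the only detail worth stating explicitly is that orthogonal incidence gives $\xx(T/2+)=-v$ under $I(y,v)=-2\langle v,n(y)\rangle n(y)$ independently of the chosen orientation of the normal, which is immediate.
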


\begin{proof}
It is sufficient to find a ray $\{tz; t\geq 0 \}$, for some $z\in \partial K$, such that the vector $\overrightarrow{0z}$ is perpendicular to the tangent hyperspace to $\partial K$ at $z$. Indeed, if $z$ satisfies this, then the function $x(t)=vt$ with $v:=\frac{2}{T}z$ attains $\partial K$ at $z$ and in the time $T/2$, then changes the velocity to
$$\xx\left(\frac{T}{2}+\right)=\frac{2}{T}z+I\left(z,\frac{2}{T}z\right) =\frac{2}{T}z-\frac{2}{||z||^2}\left\langle z,\frac{2}{T}z\right\rangle=-v,$$
and finally arrives at $0=x(T)$.

To prove the existence of $z$ mentioned above, we take a small ball $B(0,r)\subset int K$ and denote by $R:\partial B(0,r)\to \partial K$ a projection along rays, existing by the strong star-shape of the set $K$. For each point $y\in \partial K$ we take an outer normal vector $n(y)$ to $K$ at $y$ and denote by $N(y)$ its projection onto the tangent hyperspace to $S^{n-1}$ at $y/|y|$. We define the map $g:S^{n-1}\to \rn$, $g(w):=N(R(rw))$.
Thus we have obtained a tangent vector field on the sphere $S^{n-1}$. From the hairy ball theorem (see \cite{dugr}, Cor. 7.4, p. 238) it follows that $g(w)=0$, for some $w\in S^{n-1}$, which means that $z:=R(rw)$ is the vector we required..
\end{proof}

Let us remark that a regularity of $K$ may be slightly weakened. For instance, if $\partial K$ is a $C^{1,1}$ manifold, then the map $N(\cdot)$ is Lipschitz and, moreover, the set $K$ is an absolute neighborhood retract (see, e.g., \cite{gq}, p. 500). The latter property will be used in a proof of Theorem \ref{thm-a}.

Before the second example let us describe the situation we will deal with. Let $K\subset\rn$ be a compact smooth manifold with boundary, and $0\in int K$. Consider problem \eqref{problem1} with $I$ describing the standard impact law (H0) and  $f$ satisfying
\renewcommand{\labelenumi}{(H1)$_n$}
\begin{enumerate}
\item $f:[0,T]\times\rn\to\rn$ is an integrably bounded Carath\'eodory function, i.e., $f(\cdot,x):[0,T]\to \rn$ is measurable for every $x\in \rn$, $f(t,\cdot):\rn\to \rn$ is continuous for a.e. $t\in [0,T]$, and $|f(t,x)|\leq m(t)$ for some integrable function $m\in L^1([0,T])$ and each $(t,x)\in [0,T]\times \rn$,
\end{enumerate}
and (H2). Notice that $I$ can be given explicitly as $I(y,v):=-2\langle v,n(y)\rangle n(y)$, where, for some $s>0$, $y=x(s)\in\partial K$, $v=\xx(s)$ and $n(y)$ is an outer normal vector to $\partial K$ at $y$. Obviously, $I$ is continuous. Smoothness of the set $\partial K$ together with the continuity of $I$ justifies formulating the following

\begin{hyp}\label{cont-n}
The operator $\V:\rn\to K$, $\V(v):=x_v(T)$, where $x_v$ is a solution of the problem \eqref{problem},
 is continuous.
\end{hyp}

On the way to prove it one meets some difficulties. Some solutions of problem \eqref{problem} can slide along the boundary $\partial K$, at least for some time, and then their dynamics is essentially changed, i.e., $\xxx(t)\neq f(t,x(t))$ on some set with nonzero measure. This can happen if $\langle \xx(s), n(x(s))\rangle=0$ in a boundary point $x(s)\in \partial K$. Therefore we have a nontrivial hybrid system, where the dynamics on $\partial K$ can be described not only by $f$ but also by a regularity of $\partial K$ (by a normal vector to $\partial K$). We hope one could describe it explicitly if $K$ is a sublevel set of some smooth function. We leave the proof of Hypothesis \ref{cont-n} to further studies.

In what follows we show how one can use the above hypothesis.

\begin{ex}\label{mul-ex2}
\begin{em}
Assume that $K$, $f$ and $I$ are as above and define, for every $d\geq 0$, an {\em attainable set in} $T$ as $A_d:=\{x_v(T) ; |v|=d\}$. By Hypothesis \ref{cont-n}, it is a hypersurface (not necessarily smooth) in $K$. Let $\Omega_d$ be the area surrounded by $A_d$. Assume that $0\in \Omega_d$ and $0\not\in \Omega_{ad}$ for some $a\geq 0$ (see Figure \ref{fig4}). Then, from the continuity of $\V$ it follows that there exists $c=(1-\lambda)a+\lambda$, $\lambda\in [0,1]$, such that $0\in A_{cd}$. This means that problem \eqref{problem1} has a solution with an initial velocity $|v_0|=cd$.

\begin{center}
\setlength{\mfpicunit}{3cm}
\begin{mfpfig}{fig4}
\begin{mfpic}{-1}{1}{-1}{.8}
\ellipse{(0,0),1,.8}
\curve{(-.8,0),(0,.7),(.2,.2),(.3,-.1),(-.3,-.5),(-.8,0)}
\dashed\curve[1.5]{(-.7,0),(-.6,.6),(-.1,.4),(-.2,0),(-.3,-.6),(-.7,0)}
\dotted\curve[2]{(-.73,0),(-.56,.5),(-.3,.45),(.06,.6),(-.1,.3),(0,0),(-.3,-.56),(-.7,0)}
\point[2pt]{(0,0)}
 \tlabel[bl](.02,0){$0$}
 \tlabel[bl](.32,-.1){$A_d$}
 \tlabel[bl](-.09,.4){$A_{ad}$}
\tlabel[bl](-.03,-.2){$A_{cd}$}
\tlabel[bl](.9,-.4){$K$}

 \tcaption[3,2]{Figure \thefigure. Attainable sets for $|v|$ equal to $d, ad, cd$.}
\end{mfpic}
\end{mfpfig}

\end{center}
\end{em}
\end{ex}

The above example leads us to the idea of using the winding number of a curve, in a 2-dimensional case, or, more generally, a topological degree in $\rn$ (see, e.g., \cite{krza}, p. 6-8, where the degree is presented as the rotation of the vector field).

Indeed, as above we define the sets $A_d=\{\{x_v(T) ; |v|=d\}\subset K\subset\rn$ and $\Omega_d\subset K$, the area surrounded by $A_d$. Assume that $0\not\in A_d$. Obviously, $A_d$ can be treated as the image $A_d=V_T^d(S^{n-1})$, where $V_T^d:S^{n-1}\to \rn$ is defined as $V_T^d(v):=x_{dv}(T)$.
Since $V_T^d$ is continuous, the topological degree $\dg(V_T^d,B(0,1))$ is well defined. Moreover, the family $\{V_T^d\}_{d\geq 0}$ is continuous, by Hypothesis \ref{cont-n}. Thus we get to

\begin{thm}\label{n-exist}
Let $f$ and $I$ satisfy (H1)$_n$-(H2) and (H0), respectively. Assume the truth of Hypothesis \ref{cont-n}, and  that $0\not\in A_d$ for some $d\geq 0$. Then \begin{list} {\em (\roman{i})}{\usecounter{i}}
 \item $\dg(V_T^d,B(0,1))\neq 0$ implies the existence of a solution $x_v$ to problem \eqref{problem1} with an initial velocity $v\in B(0,d)$,
 \item $\dg(V_T^{d_1},B(0,1))\neq \dg(V_T^{d_2},B(0,1))$, for some $d_1<d_2$, implies the existence of a solution $x_v$ to problem \eqref{problem1} with an initial velocity $v\in B(0,d_2)\setminus B(0,d_1)$.
\end{list}
\end{thm}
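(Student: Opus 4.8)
The plan is to read $\dg(\V^d,B(0,1))$ as the Brouwer degree of the map $\V^d$ at the point $0$, after extending $\V^d$ from $S^{n-1}$ to the closed ball $\overline{B(0,1)}$ by the very same formula $\V^d(v):=x_{dv}(T)=\V(dv)$. By Hypothesis~\ref{cont-n} this extension is continuous, and its restriction to $\partial B(0,1)=S^{n-1}$ is exactly the attainable set $A_d$. Since $0\notin A_d=\V^d(\partial B(0,1))$, the degree $\dg(\V^d,B(0,1))$ (taken with respect to $0$) is well defined. Throughout I use that a function $x_v$ solves \eqref{problem1} if and only if its initial velocity $v$ satisfies $x_v(0)=0$ (automatic for solutions of \eqref{problem-cauchy}) together with $\V(v)=x_v(T)=0$.

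For (i) I invoke the existence (solution) property of the degree: if $\dg(\V^d,B(0,1))\neq 0$, then there is $v_0\in B(0,1)$ with $\V^d(v_0)=x_{dv_0}(T)=0$. Setting $v:=dv_0$ gives $|v|=d|v_0|<d$, so $v\in B(0,d)$, and $x_v$ is the required solution of \eqref{problem1}.

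For (ii) I argue by contradiction via homotopy invariance. The mere definedness of the two degrees forces $0\notin A_{d_1}$ and $0\notin A_{d_2}$. Suppose \eqref{problem1} had no solution with initial velocity of norm in $[d_1,d_2]$; this is equivalent to $0\notin A_d$ for every $d\in[d_1,d_2]$. Consider $H:[d_1,d_2]\times\overline{B(0,1)}\to\rn$, $H(d,v):=\V(dv)$, which is jointly continuous since $\V$ is continuous (Hypothesis~\ref{cont-n}) and $(d,v)\mapsto dv$ is continuous — this is precisely the asserted continuity of the family $\{\V^d\}$. Because $H(d,v)=\V^d(v)\neq 0$ for all $d\in[d_1,d_2]$ and all $v\in\partial B(0,1)$, the homotopy is admissible, and homotopy invariance yields $\dg(\V^{d_1},B(0,1))=\dg(\V^{d_2},B(0,1))$, contradicting the hypothesis. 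Hence there exist $d^*\in[d_1,d_2]$ and $v_0\in S^{n-1}$ with $x_{d^*v_0}(T)=0$; as $0\notin A_{d_1}\cup A_{d_2}$, the value $d^*$ is interior, $d_1<d^*<d_2$, so $v:=d^*v_0$ lies in $B(0,d_2)\setminus B(0,d_1)$ and $x_v$ solves \eqref{problem1}.

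The genuinely hard content is not in this degree-theoretic argument, which is routine once the maps are set up; it is entirely absorbed into Hypothesis~\ref{cont-n}, whose proof (obstructed by motions sliding along $\partial K$) is left open. The only points demanding care are administrative: that $\V^d$ is defined on the full closed ball and omits $0$ on its boundary so the degree exists, that $H$ is jointly continuous, and the treatment of the endpoints — the exclusion of $d^*\in\{d_1,d_2\}$ through $0\notin A_{d_1}\cup A_{d_2}$ is exactly what places $v$ in the half-open annulus $B(0,d_2)\setminus B(0,d_1)=\{d_1\leq|v|<d_2\}$.
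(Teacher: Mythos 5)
Your proof is correct and follows essentially the same route as the paper: part (i) by the existence property of the Brouwer degree, part (ii) by a contradiction argument via homotopy invariance using the family $\{V_T^d\}_{d\in[d_1,d_2]}$, whose admissibility rests on Hypothesis \ref{cont-n}. You merely spell out the administrative details (extension to the closed ball, joint continuity of the homotopy, exclusion of the endpoint values of $d$) that the paper leaves implicit.
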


\begin{proof}
The first part easily follows from the existence property of the degree. The second one from the homotopy invariance. Indeed, if there were no zero of $V_T^d$ for each $d_1\leq d\leq d_2$, then, since $V_T^{d_1}$ and $V_T^{d_2}$ are homotopic, one would have $\dg(V_T^{d_1},B(0,1))=\dg(V_T^{d_1},B(0,1))$; a contradiction.
\end{proof}

A natural hypothesis is that fast movements, i.e., big velocities, can compensate the unevenness of the billiard table described by the nontrivial acceleration $f(t,x)$. To check this, we note the following

\begin{lm}\label{lemma-a}
Under the assumptions (H1)$_n$-(H2) and (H0), for every $\eps>0$ there exists $d>0$ such that, for each $|v|\geq d$, each solution $x_v$ of \eqref{problem}, with an initial velocity $v$, and for each $t\in [0,t_1(v)]$ ($t_1(v)$ is the first time of impact, see Section \ref{1dim}) one has $|x(t)-vt|<\eps$. In other words, the movement is close to the one with a uniform velocity.
\end{lm}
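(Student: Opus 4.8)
The plan is to exploit the fact that, strictly before the first impact, the trajectory obeys the non-impulsive equation $\xxx=f(t,x)$, so that its deviation from the uniform straight motion $t\mapsto vt$ is controlled solely by the integrable bound $m$, while the length of the time interval on which this deviation must be controlled shrinks as $|v|$ grows. Thus neither (H2) nor the precise form of the impact law (H0) will actually be used: we work entirely before the first collision, where only the bound from (H1)$_n$ enters.

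First I would record the integral representation valid on $[0,t_1(v)]$. Since $x_v(0)=0$ and $\xx_v(0)=v$, integrating $\xxx=f(t,x)$ twice and interchanging the order of integration gives
$$x_v(t)=vt+\int_0^t (t-s)\,f(s,x_v(s))\,ds.$$
Using (H1)$_n$ and $|f(s,x_v(s))|\leq m(s)$, this yields the velocity-independent estimate
$$|x_v(t)-vt|\leq \int_0^t (t-s)\,m(s)\,ds\leq t\,||m||_1$$
for every $t\in [0,t_1(v)]$.

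The decisive step is to show that $t_1(v)\to 0$ as $|v|\to\infty$, and here I would use the geometry of $K$. Since $K$ is compact, fix $R>0$ with $K\subset \ov{B(0,R)}$, and since $0\in int\, K$ the trajectory genuinely starts in the interior. For $|v|>||m||_1$ the reverse triangle inequality combined with the estimate above gives
$$|x_v(t)|\geq |v|\,t-|x_v(t)-vt|\geq t\,(|v|-||m||_1),$$
so $|x_v(t)|$ grows at least linearly with positive slope. Consequently the trajectory cannot remain in $K\subset \ov{B(0,R)}$ indefinitely, so it must meet $\partial K$; the first such time $t_1(v)$ is therefore finite, and since $|x_v(t)|\leq R$ for $t<t_1(v)$ we obtain $t_1(v)\leq R/(|v|-||m||_1)$.

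Combining the two estimates, for every $t\in [0,t_1(v)]$,
$$|x_v(t)-vt|\leq t_1(v)\,||m||_1\leq \frac{R\,||m||_1}{|v|-||m||_1},$$
whose right-hand side tends to $0$ as $|v|\to\infty$. Given $\eps>0$ it then suffices to take any $d>||m||_1+R\,||m||_1/\eps$, so that $|v|\geq d$ makes the bound strictly smaller than $\eps$. I expect the only genuinely substantive point to be the shrinking of the first impact time $t_1(v)$, which is what makes the fixed bound $t\,||m||_1$ useful; the deviation estimate itself is an elementary consequence of integrable boundedness and requires no Lipschitz or uniqueness hypothesis.
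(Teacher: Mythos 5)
Your proof is correct and follows essentially the same route as the paper: both arguments bound the pre-impact deviation by $t\,||m||_1$ (using only the integrable bound from (H1)$_n$, not (H2) or (H0)) and then show that the first impact time is $O(1/|v|)$ --- the paper by fixing a small $\rho$ with $\rho||m||_1<\min\{\eps,1\}$ and checking that the straight line $vt$ has left a unit neighbourhood of $K$ by time $\rho$, you by the equivalent lower bound $|x_v(t)|\ge t(|v|-||m||_1)$ forcing $t_1(v)\le R/(|v|-||m||_1)$. Your explicit rate is a harmless quantitative refinement of the same idea.
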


\begin{proof}
Take a small $\rho>0$ such that $\rho ||m||_1<\min\{\eps,1\}$. Put $D:=\delta(K)+1$, where $\delta(K)$ is the diameter of the compact set $K$. Now, let $d>0$ be such that $d\rho>D$. This implies that the curve $z(t)=vt$, for $|v|\geq d$, attains the boundary $\partial K$ before the time $\rho$, and $\dist(z(\rho),\partial K)>1$. Then, for a solution $x_{(0,0,v)}$ of the Cauchy problem $\xxx=f(t,x), x(0)=0, \xx(0)=v$ we obtain
\[|x_{(0,0,v)}(t)-z(t)|\leq \int_0^t|\xx_{(0,0,v)}(s)-v|ds\leq t||m||_1 <\eps\]
for every $t\leq \rho$. Moreover, $|x_{(0,0,v)}(\rho)-z(\rho)|\leq \rho ||m||_1 <1$ which implies that
$x_{(0,0,v)}(\rho)\not\in K$ and, consequently, $x_{(0,0,v)}$ meets the barrier $\partial K$ at a time before $\rho$. This completes the proof.
\end{proof}

The problem is that the increase of $|v|$ causes the increase of a number of impacts. Nevertheless, since the impulse function $I$ is continuous, we can formulate the following hypothesis which is left as an open problem.

\begin{hyp}\label{prop-a}
Under the assumptions (H1)$_n$-(H2) and (H0), for every $\eps>0$ there exists $d>0$ such that for each $|v|\geq d$, each solution $x_v$ of \eqref{problem}, with an initial velocity $v$, and for each $t\in [0,T]$,  one has $|x(t)-vt|<\eps$.
\end{hyp}

Now we are in a position to formulate a result on a correspondence between the general problem \eqref{problem1} and the uniform motion Dirichlet problem \eqref{uniform-eq1}, i.e., the Dirichlet problem in the Birkhoff billiard space.

\begin{thm}\label{thm-a}
We hold the assumptions of Theorem \ref{n-exist}, assume the truth of Hypothesis \ref{prop-a}, and assume that there is a sequence $d_k\nearrow\infty$ such that $\deg(W_T^{d_k},B(0,1))\neq 0$ and $\deg(W_T^{d_{k-1}},B(0,1))\neq \deg(W_T^{d_k},B(0,1))$, where $W_T^d:S^{n-1}\to\rn$, $W_T^d(v):=z_v(T)$ and $z_v$ is a unique solution of the problem \eqref{uniform-eq1} with the initial velocity $v$. Assume also that $\dist(W_T^{d_k}(S^{n-1}),0)\geq r_0>0$ for some $r_0$ and every $k\geq 1$.

Then there exists $k_0\geq 1$ such that $\deg(V_T^{d_k},B(0,1))\neq 0$ for every $k\geq k_0$. In consequence, problem \eqref{problem1} has infinitely many solutions.
\end{thm}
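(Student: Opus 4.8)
The plan is to compare the full‑problem endpoint map $V_T^{d_k}$ with the uniform‑motion endpoint map $W_T^{d_k}$ and to transport the degree information assumed for $W_T^{d_k}$ onto $V_T^{d_k}$ by homotopy invariance, after which existence and multiplicity follow from Theorem \ref{n-exist}. Both maps $S^{n-1}\to\rn$ are continuous: $V_T^{d}$ by Hypothesis \ref{cont-n}, and $W_T^{d}$ because the Birkhoff (uniform) billiard flow depends continuously on the initial velocity. Since $\dist(W_T^{d_k}(S^{n-1}),0)\geq r_0>0$, the map $W_T^{d_k}$ avoids the origin and the degrees in question are well defined; the goal is to show that $V_T^{d_k}$ also avoids $0$ and is homotopic to $W_T^{d_k}$ in $\rn\setminus\{0\}$.

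First I would exploit Hypothesis \ref{prop-a}, whose content (cf. the gloss ``the movement is close to the one with a uniform velocity'') is that for large initial speed the full motion $x_v$ stays uniformly close on $[0,T]$ to the corresponding uniform‑motion trajectory $z_v$. Fix $\eps:=r_0/2$. By Hypothesis \ref{prop-a} there is $d>0$ with $\sup_{t\in[0,T]}|x_v(t)-z_v(t)|<\eps$ whenever $|v|\geq d$. Choosing $k_0$ so that $d_{k_0}\geq d$, we obtain for every $k\geq k_0$ and every $v\in S^{n-1}$ (note $|d_kv|=d_k\geq d$)
\[
|V_T^{d_k}(v)-W_T^{d_k}(v)|=|x_{d_kv}(T)-z_{d_kv}(T)|<\eps=\tfrac{r_0}{2}.
\]
Then the straight‑line homotopy $H(v,\lambda):=W_T^{d_k}(v)+\lambda\bigl(V_T^{d_k}(v)-W_T^{d_k}(v)\bigr)$, $\lambda\in[0,1]$, satisfies $|H(v,\lambda)|\geq |W_T^{d_k}(v)|-|V_T^{d_k}(v)-W_T^{d_k}(v)|>r_0-\tfrac{r_0}{2}=\tfrac{r_0}{2}>0$, so it never meets $0$. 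By homotopy invariance of the degree, $\deg(V_T^{d_k},B(0,1))=\deg(W_T^{d_k},B(0,1))\neq 0$ for every $k\geq k_0$; in particular Theorem \ref{n-exist}(i) already yields a solution of \eqref{problem1} for each such $k$.

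For the multiplicity, note that for $k\geq k_0+1$ both equalities $\deg(V_T^{d_{k-1}},B(0,1))=\deg(W_T^{d_{k-1}},B(0,1))$ and $\deg(V_T^{d_k},B(0,1))=\deg(W_T^{d_k},B(0,1))$ hold, while by assumption $\deg(W_T^{d_{k-1}},B(0,1))\neq \deg(W_T^{d_k},B(0,1))$. Hence $\deg(V_T^{d_{k-1}},B(0,1))\neq\deg(V_T^{d_k},B(0,1))$, and Theorem \ref{n-exist}(ii) provides a solution of \eqref{problem1} whose initial velocity lies in the annulus $B(0,d_k)\setminus B(0,d_{k-1})$. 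Since $d_k\nearrow\infty$, these annuli are pairwise disjoint; hence the solutions obtained for distinct $k\geq k_0+1$ have distinct initial velocities $\xx(0)$ and are therefore pairwise distinct. As there are infinitely many admissible $k$, problem \eqref{problem1} has infinitely many solutions.

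The main obstacle is exactly the uniform closeness of $V_T^{d_k}$ to $W_T^{d_k}$, that is, upgrading a single‑trajectory estimate to a bound uniform over $v\in S^{n-1}$ and $t\in[0,T]$ that survives the growing number of impacts as $d_k\to\infty$; this is precisely what Hypothesis \ref{prop-a} is designed to supply, and it is the only nonroutine ingredient. Once that closeness is granted, the remainder reduces to the elementary non‑vanishing of a straight‑line homotopy (guaranteed by the uniform lower bound $r_0$ on $|W_T^{d_k}|$) together with the existence and degree‑jump parts of Theorem \ref{n-exist}.
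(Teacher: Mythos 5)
Your argument is correct and follows the same architecture as the paper's: use Hypothesis \ref{prop-a} to make $V_T^{d_k}$ and $W_T^{d_k}$ uniformly close for large $k$, transfer the degree by homotopy invariance, and then invoke Theorem \ref{n-exist}. The one genuine difference is the homotopy itself: the paper appeals to the fact that $K$ is an ANR and that $\eps$-near maps into an ANR are joined by a small ($r_0$-)homotopy, whereas you use the straight-line homotopy $H(v,\lambda)=W_T^{d_k}(v)+\lambda\bigl(V_T^{d_k}(v)-W_T^{d_k}(v)\bigr)$ and the estimate $|H(v,\lambda)|>r_0/2$. Since the degree is computed for maps into $\rn$ and one only needs the homotopy to avoid $0$ (not to stay inside $K$), your linear homotopy suffices and is more elementary --- it removes any reliance on the ANR structure of $K$ for this step. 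You are also more explicit than the paper on the multiplicity part, noting that the transferred degrees jump between consecutive $d_{k-1}$ and $d_k$ and that the resulting annuli $B(0,d_k)\setminus B(0,d_{k-1})$ are pairwise disjoint, so the solutions produced are genuinely distinct; the paper leaves this implicit.
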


\begin{proof}
Since $K$ is an ANR (see, e.g., \cite{bgp}), take $\eps>0$ such that each $\eps$-near maps from $S^{n-1}$ to $K$ are $r_0$-homotopic (see \cite{hu}, p. 111). Choose $d>0$ for $\eps$ as in Proposition \ref{prop-a}, and $k_0$ such that $d_{k_0}\geq d$. Then $\dist(V_T^{d_k}(S^{n-1}),0)>0$ for every $k\geq k_0$, and, by the homotopy property of the degree,
\[\deg(V_T^{d_k},B(0,1))=\deg(W_T^{d_k},B(0,1))\neq 0.\]
Moreover, by Theorem \ref{n-exist} (ii), there is a solution $x_k$ with an initial velocity $|v_k|\in (d_{k-1},d_k)$.
\end{proof}

\begin{ex}\label{ex-b}
\begin{em}
Take $K:=\overline{B(0,r)}\subset \rn$, where $n$ is odd, and $T:=1$. Then $W_T^{4k-3}=id_{S^{n-1}}$ and $W_T^{4k-1}=-id_{S^{n-1}}$. It implies that $\deg(W_T^{4k-3},B(0,1))=1$ and $\deg(W_T^{4k-1},B(0,1))=-1$. Moreover, $\dist(W_T^{2k-1}(S^{n-1}),0)=1$. Thus, all geometric assumptions from Theorem \ref{thm-a} are satisfied, and the theorem works for any integrably bounded right-hand side $f$.
\end{em}
\end{ex}

\noindent
{\bf Open problems and perspectives of research}
\renewcommand{\labelenumi}{\theenumi}
\begin{enumerate}
\item The author realizes that a computation of $\deg(V_T^{d},B(0,1))$ is difficult in general. He leaves as an open problem the question on sufficient conditions for $\deg(V_T^{d},B(0,1))\neq 0$. It will be a subject of a further study. Let us note that it is not hard to check that, if $\deg(W_T^d,B(0,1))\neq 0$ for some $d>0$, then for small $||m||_1$ one has $\deg(V_T^d,B(0,1))\neq 0$, as well. Hence problems with a uniform motion should be carefully examined.
\item Is it true that $\limsup_{d\to\infty}|\deg(V_T^{d},B(0,1))|>0$ for billiards which are strongly star-shaped with respect to $0$? Is it true for strictly convex billiards?
\item For simplicity we have assumed that the right-hand side $f$ implies the uniqueness of a solution of problem \eqref{problem-cauchy}. It is interesting and quite natural to consider multivalued right-hand sides because of the control theory, where we ask for a suitable control $u$ under which the Dirichlet problem
\[
 \left\{ \begin{array}{ll}
\xxx(t)=f(t,x(t),u(t)), & \mbox{for a.e. } t\in [0,T], x(t)\in int\, K,\\
\xx(s+)=\xx(s)+I(x(s),\xx(s)), & \mbox{if } x(s)\in \partial K,\\
x(0)=x(T)=0, &
\end{array}\right.
\]
has a solution. Indeed, we can define $F(t,x):=\{f(t,x,u) ; u\in U(t)\}$, where $U(t)$ is an admissible set of controls, and examine the inclusion $\xxx\in F(t,x)$.
\item It is worth checking the results presented in the paper under weaker growth conditions on the right-hand side $f$ (comp. assumption (H1)$_n$).
\item In the whole paper we have dealt with impulse functions satisfying condition (H0), that is, with fully elastic collisions with barrier. The non fully elastic case is left for the future study.
\end{enumerate}

\section*{Acknowledgements}

\noindent This research was supported by the Polish NCN grant no. 2013/09/B/ST1/01963.

\noindent The author thanks Mateusz Maciejewski for valuable numeric simulations helpful to create hypotheses.


\bibliographystyle{plain}
\bibliography{Dirichlet_billiards_GG}

\closegraphsfile
\end{document}